\documentclass[10pt,conference]{IEEEtran}

\usepackage{mathrsfs}
\usepackage{graphics} 
\usepackage{amsmath} 
\usepackage{amssymb}  
\usepackage{amsthm}
\usepackage{cite}
\usepackage{bm}
\usepackage{acronym}
\usepackage{paralist}
\usepackage{float}
\usepackage{color}
\usepackage{epstopdf}
\usepackage{multicol}
\usepackage{tikz}
\usepackage{hyperref}
\usepackage{mathtools}
\usepackage{subcaption}
\usepackage{tabularx,ragged2e}

\usepackage{graphicx}
\usepackage{diagbox}

\usepackage{soul}
\usepackage[dvipsnames]{xcolor}

\makeatletter
\hypersetup{colorlinks=true}
\AtBeginDocument{\@ifpackageloaded{hyperref}
  {\def\@linkcolor{blue}
  \def\@anchorcolor{red}
  \def\@citecolor{red}
  \def\@filecolor{red}
  \def\@urlcolor{red}
  \def\@menucolor{red}
  \def\@pagecolor{red}
\begingroup
  \@makeother\`%
  \@makeother\=%
  \edef\x{%
    \edef\noexpand\x{%
      \endgroup
      \noexpand\toks@{%
        \catcode 96=\noexpand\the\catcode`\noexpand\`\relax
        \catcode 61=\noexpand\the\catcode`\noexpand\=\relax
      }%
    }%
    \noexpand\x
  }%
\x
\@makeother\`
\@makeother\=
}{}}
\makeatother

\usepackage[linesnumbered,ruled,vlined]{algorithm2e}

\SetKwComment{Comment}{$\triangleright$\ }{}
\SetKw{KwAnd}{and}
\SetKw{KwOr}{or}
\SetKw{KwNot}{not}

\newtheorem{Theorem}{Theorem}

\newtheorem{Problem}{Problem}

\newtheorem{Remark}{Remark}

\newtheorem{Assumption}{Assumption}

\DeclareMathOperator*{\argmin}{\textrm{argmin}}

\newcommand{\bequ}{\begin{eqnarray}}
\newcommand{\eequ}{\end{eqnarray}}

\IEEEoverridecommandlockouts

\def\BibTeX{{\rm B\kern-.05em{\sc i\kern-.025em b}\kern-.08em
    T\kern-.1667em\lower.7ex\hbox{E}\kern-.125emX}}

\begin{document}

\title{\Large \bf Hybrid topology control: a dynamic leader-based distributed edge-addition and deletion mechanism}

\author{Kunal Garg \and Xi Yu\thanks{KG is with the School of Engineering for Matter, Transport and Energy, XY is with the School of Manufacturing Systems and Networks at the Arizona State University.  Email: \texttt{\{kgarg24, xyu\}@asu.edu}.}}

\maketitle

\begin{abstract}
Coordinated operations of multi-robot systems (MRS) require agents to maintain communication connections to accomplish team objectives. However, maintaining the connections imposes costs in terms of restricted robot mobility, resulting in suboptimal team performance. In this work, we consider a realistic MRS framework in which agents are subject to unknown dynamical disturbances and experience communication delays. Most existing works on connectivity maintenance use consensus-based frameworks for graph reconfiguration, where decision-making time scales with the number of nodes and requires multiple rounds of communication, making them ineffective under communication delays. To address this, we propose a novel leader-based decision-making algorithm that uses a central node for efficient real-time reconfiguration, reducing decision-making time to depend on the graph diameter rather than the number of nodes and requiring only one round of information transfer through the network. We propose a novel method for estimating robot locations within the MRS that actively accounts for unknown disturbances and the communication delays. Using these position estimates, the central node selects a set of edges to delete while allowing the formation of new edges, aiming to keep the diameter of the new graph within a threshold. We provide numerous simulation results to showcase the efficacy of the proposed method. 
\end{abstract}


\section{Introduction}


Complex mission tasks, such as surveillance of deep-sea regions, often involve multi-robot systems (MRS) operating without human intervention or a central coordination system \cite{leeAutonomousApproachObserving2017,loncar_heterogeneous_2019}. As a result, the robots must remain connected to each other so that any task-relevant information can be shared with the MRS team, especially in GPS-denied operations where losing contact with the team may result in loss of the robotic agent. The network topology for communication is required to evolve as the robots make progress towards their objective, without losing structural properties essential for efficient coordination, such as a small graph diameter for quick data transmission \cite{stefanov2011design}. Thus, a key challenge in MRS coordination is controlling how the graph topology evolves, preserving connectivity while maintaining an information flow pattern that remains useful throughout deployment \cite{kantaros2016global,gielis2022critical}.

As an illustrative scenario, consider a team of underwater robots on a deep-sea mission, devoid of human intervention and GPS signals, operating amid unknown water currents. Underwater communication is commonly dominated by acoustic devices with limited bandwidth and range, leading to significant latency and nontrivial packet loss \cite{bresciani_cooperative_2021,ma2025adaptive}. Moreover, underwater communication is often broadcast and strongly affected by multipath \cite{tan2011survey, fang2019precision}, making it difficult to infer the true origin of a received message without additional labeling or processing overhead. 
Furthermore, complex mission tasks require these robots to continuously move to achieve the team objective, which may necessitate changing the communication topology with time. 
In particular, some communication edges might need to be removed to enable MRS to make progress towards the team objective, e.g., if an edge causes deadlock for the complete MRS, it must be disconnected so that the MRS can move \cite{garg2024deadlock}.

Existing solutions for connected MRS can be categorized as \textit{centralized} and \textit{distributed} \cite{sabattini2014decentralized,lin2021online,yang2024decentralized}. Centralized methods are communication-heavy and prone to single-point failures. 
On the other hand, decentralized approaches often rely on achieving consensus for decision-making, which requires frequent information exchange and iterative agreement steps that implicitly presume timely, repeated communication \cite{zavlanos2008distributed,yu2021resilient}. In an asynchronous communication setting with delays, deciding which existing edge can be dropped safely without losing graph connectivity may require multiple rounds of back-and-forth exchanges, delaying decisions and potentially leading to failures or increasing operational costs.

The impact of delay compounds in large-scale MRS settings that require multi-hop communication. When one robot’s message must traverse several hops before reaching a distant teammate, the information is already delayed upon arrival. There is work on state estimation over a network in the presence of time delays \cite{guarro2018state}, but such observer-based techniques are limited to linear systems. 
This motivates the design of a novel topology control mechanism that can work with delayed, uncertain, and bandwidth-constrained information exchange. 
A decision on edge deletion requires the costs of edge maintenance to be communicated from all nodes in the graph to the entity making the decision. In a consensus-based framework, such as max-consensus (see \cite{zavlanos2008distributed}), communication delays slow convergence and require multiple rounds of information exchange. To balance robustness and efficiency, in this work, we adopt a novel amalgamation of a centralized and a decentralized framework that leverages the broadcast communication protocol. 

We propose a \textit{hybrid} topology control framework for MRS operating under delayed and uncertain state information, where the decision is made by a central node after collecting information from all the nodes, but the central node is not fixed and is allowed to change as the graph topology evolves; hence, the usage of the term hybrid (see Figure \ref{fig:overview figure}). 
Furthermore, instead of active communication, we use a broadcast framework in which messages are available to all neighbors. This framework aims to maintain guaranteed connectivity while reducing coordination latency and preserving effective information flow as the team moves in communication-constrained environments, with underwater acoustics serving as a representative motivating example. The main contributions of the paper are as follows:
\begin{enumerate}
    \item We propose a hybrid mechanism for delay-aware topology reconfiguration under multi-hop communication with time delays. Compared to consensus-based approaches that require multi-round communication for convergence, the proposed mechanism requires only one round for decision-making and one for transmitting the decisions.
    \item Unlike existing connectivity-preserving approaches that assume perfect state knowledge, we model disturbances in the system dynamics and design a mechanism for \textit{safe} topology decisions under stale, uncertain information. 
    \item Our proposed topology reconfiguration decisions account for the worst-case information propagation delay by bounding the resulting graph's diameter and switching the central node after topology reconfiguration.
\end{enumerate}

\begin{figure}
    \centering
    \includegraphics[width=0.95\linewidth]{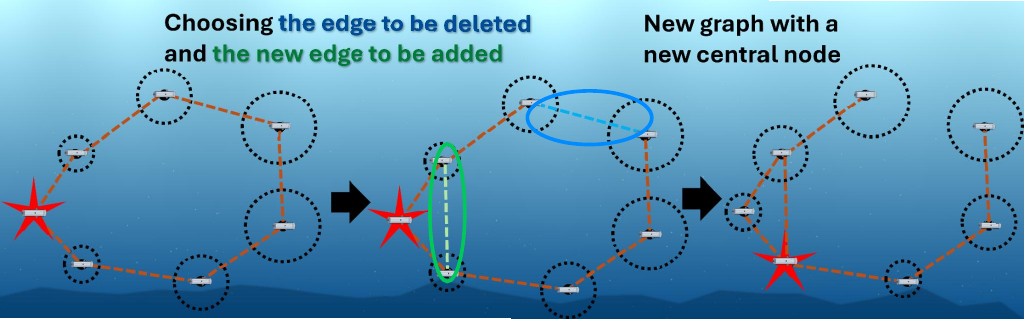}
    \caption{Hybrid topology control framework: the central node decides {\color{blue}\textbf{the edge to be deleted}} and {\color{PineGreen}\textbf{the edge to be added}} to the base graph. Once the new graph is formed, a possibly new central node is chosen. The position uncertainties of each node for the central node are shown through dotted circles.}
    \label{fig:overview figure}
\end{figure}

\section{Preliminaries}\label{sec: Preliminaries}
\textit{Notations}: The set of real numbers is denoted as $\mathbb R$. A circular ball centered at a location $p\in \mathbb R^3$ and of radius $r\geq 0$ is denoted as $\mathbb B_r(p)$. The floor and ceiling functions are denoted as $\lceil \cdot \rceil$ and $\lfloor \cdot\rfloor$, respectively. Given a time-varying connectivity graph $\mathcal G(t) = (\mathcal V, \mathcal E(t))$ on the network among the agents, $\mathcal V = \{1, 2, \cdots, N\}$ denotes the set of nodes and $\mathcal E(t)\subset\mathcal V\times\mathcal V$ denotes the set of edges, where $(i, j)\in\mathcal E(t)$ implies that information can flow from agent $j$ to agent $i$. In this work, we assume broadcast-based communication, which leads to an undirected graph, so that $(i,j)\in \mathcal E(t)\implies (j,i)\in \mathcal E(t)$. Let $|S|$ denote the cardinality of a set $S$ if $S$ is a discrete set and its $n-$ dimensional volume, e.g., length, area, 3D volume, etc., if $S\subset \mathbb R^n$ is a continuous set. 

The graph $\mathcal G$ is said to be connected at time $t$ if there is a path between each pair of agents $(i, j), i, j\in\mathcal V$ at $t$ \cite{mesbahi2010graph}. One method of checking the connectivity of the MRS is through the Laplacian matrix, defined as $L(\mathcal E(t)) \coloneqq \mathcal D(t) - \mathcal A(t)$, where $A(t)$ is the adjacency matrix at time $t$ given as 
\begin{align*}
    \mathcal A_{ij}(t)= \begin{cases}
        1 & (i, j)\in \mathcal E(t), \\
        0 & \text{otherwise},
    \end{cases}
\end{align*}
and $\mathcal D(t)$ is the degree matrix at time $t$ defined as 
\begin{align*}
    \mathcal D_{ij}(t) = \begin{cases}
        \sum_{j}\mathcal A_{ij}(t) & i = j,\\
        0 & \text{otherwise}.
    \end{cases}
\end{align*}
From \cite[Theorem 2.8]{mesbahi2010graph}, the MRS is connected at time $t$ if and only if the second smallest eigenvalue of the Laplacian matrix is positive, i.e., $\lambda_2(L(\mathcal G(t))) > 0$.

Given a connected graph $\mathcal G = (\mathcal V, \mathcal E)$, the eccentricity $e_{\mathcal G}(v)$ of a node $v\in \mathcal V$ is defined as the maximum length of the shortest path to any other node in the graph, i.e.,   $e_{\mathcal G}(v) = \max_{j\in \mathcal V}d_{\mathcal G}(v, j)$, where $d_{\mathcal G}(i, j)$ denote the \textit{graph-distance} between the nodes $i, j\in \mathcal V$, defined as the length of the shortest path between the nodes $i, j$.\footnote{When clear from the context, we omit the subscript $\mathcal G$ from the distance and the eccentricity functions for the sake of brevity.}
The diameter $D(\mathcal G)$ of the graph $\mathcal G$ is defined as the maximum value of the inter-nodes distances, i.e., $D(\mathcal G) = \max_{i, j\in \mathcal V}d(i, j)$, or equivalently, $D(\mathcal G) = \max_{v\in \mathcal V}e(v)$ \cite{chung1989diameters}. The radius of the graph, on the other hand, is defined as the smallest eccentricity of the graph, i.e., $\text{rad}(\mathcal G) = \min_{v\in \mathcal V}e(v)$. As shown in \cite[Ch. 1]{diestel2025graph}, the relationship $\text{rad}(\mathcal G)\leq D(\mathcal G)\leq 2 \text{rad}(\mathcal G)$ holds between the radius and the diameter of a graph.
As a result, we have that $\text{rad}(\mathcal G)\in \left[\lceil\frac{D(\mathcal G)}{2}\rceil, D(\mathcal G)\right]$. A node $v_c$ is called \textit{central} if its largest distance from any other vertex is the smallest, i.e., $v_c = \argmin_{v\in \mathcal V}e(v)$. 

\section{Problem formulation}\label{sec: Problem}
We consider a multi-robot system (MRS) consisting of $N$ robotic agents labeled $\{1, 2, \cdots, N\}$, where the dynamics of the $i$-th robot is given as
\begin{align}\label{eq: sys dyn}
    \dot x_i = f(x_i) + g(x_i)u_i + d(t, x_i),
\end{align}
where $x_i\in \mathbb R^n$ denote the state and $u_i\in \mathcal U_i\subset\mathbb R^m$ the input for the $i-$th robot, $f, g$ the dynamics functions, and $d$ the unknown external disturbance or the unmodeled dynamics. Under a feedback control policy $\pi_i$ for agent $i$, the closed-loop system is given by:
\begin{align}\label{eq: sys closed loop}
    \dot x_i = F(x_i) + d(t, x_i),
\end{align}
where $F(x_i) = f(x_i) + g(x_i)\pi_i$, whose solutions are denoted as $\phi_i(\cdot, x_i(0); \pi_i, d):\mathbb R\to \mathbb R^n$.\footnote{When clear from the context, we suppress the arguments of the solution $\phi$ for the sake of brevity.} Let $p_i\in \mathbb R^3$ denote the position vector of the $i-$th robot and $R>0$ be the communication radius within which it can share or receive information, with $P = [p_1^T, p_2^T, \cdots, p_N^T]^T$, the conglomerate of the positions of the MRS. The agents need to maintain connections with each other to build the team's knowledge, communicate the mission objectives, and coordinate to accomplish the team's objectives. Define a time-varying connectivity graph $\mathcal G(t) = (\mathcal V, \mathcal E(t))$ on the network among the agents. Here, $\mathcal V = \{1, 2, \cdots, N\}$ denotes the set of nodes and $\mathcal E(t)\subset\mathcal V\times\mathcal V$ denotes the set of edges, where $(i, j)\in \mathcal E(t)$ only if $p_{ij}\coloneqq \|p_i(t)-p_j(t)\|\leq R$ at time $t$. 
Next, let $c_{ij}(t)$ be the cost associated with the edge $(i, j)$. This cost can represent the mobility restriction it imposes on the associated nodes to maintain the connection. We can define the distance-based cost for an edge $(i, j)$ as
\begin{align}\label{eq: cost distance}
    c_{ij}(t) = \begin{cases}
        0 & p_{ij}(t) \leq \rho_m R, \\
        c_{max}(p_{ij}(t)-\rho_m R) & \rho_m R\leq p_{ij}(t)\leq R, \\
        \infty, & p_{ij}(t^+)>R, 
    \end{cases}
\end{align}
for some $\rho_m\in (0, 1)$ and $c_{max}>0$, so that the cost increases as the \textit{physical} displacement between the robots grow beyond a threshold $\rho_m R$ and is assigned as $\infty$ if an existing edge breaks at time $t$, denote as $p_{ij}(t^+)>R$. In practice, this threshold can be determined based on the communication protocol so that the communication delay remains below $\delta T$. 

The same cost function can also be used to model the hindrance of maintaining an edge. Given a team objective function taking the value $\Phi(\mathcal G(t), P(t))$\footnote{When clear from the context, we suppress the argument $P(t)$ from the objective function.} for a given graph configuration $\mathcal G(t)$ and MRS positions $P(t)$ at some time $t\geq 0$, the cost of an edge $c_{ij}(t)$ can be described as the magnitude by which the team objective can decrease by deleting an edge, e.g., due to increase in mobility of the MRS and increased capability of the team in making progress. Such an edge $(i, j)\in \mathcal E(t)$ can be labeled as a \textbf{bottleneck} edge for the MRS progress at the time $t$. 
Here, the parameter $\rho_m$ in \eqref{eq: cost distance} can dictate when an edge becomes a bottleneck. Note that the cost function $c_{ij}(t) = 0$ if $(i, j)\notin \mathcal E(t)$, i.e., there is no cost associated with a pair of nodes that do not have a direct edge between them. It is possible that a pair of nodes are within each other's communication region but do not have a communication link. As a result, in that case, the assigned cost for such a pair is still zero. We define the total cost for maintaining the connections in graph $\mathcal G(t)$ at a time $t$ as
    $c_T(\mathcal G(t)) = \sum_{i, j\in \mathcal  V}c_{ij}(t) + \mathbb I[\lambda_2(L(\mathcal E(t))>0]$,
where the indicator function $\mathbb I$ is defined as
    $\mathbb I[a] = 0$ when $a$ is true and $\infty$ otherwise,
so that the total cost of a \textit{disconnected} graph is $+\infty$. Let $\mathcal N_i(t)$ denote the set of neighbors of the node $i$ at time $t\geq 0$ that are within its communication region, i.e., $\mathcal N_i(t) = \{j\in \mathcal V\; |\; p_{ij}(t)\leq R\}$ and $\overline{\mathcal E}(t) = \{(i, j)\; i\in \mathcal V, j\in \mathcal N_i(t)\}$ denote the set of communication edges which need not be the same as the \textit{base} edge set $\mathcal E(t)$. Let $\overline{\mathcal G}(t) = (\mathcal V, \overline{\mathcal E}(t))$ denote resulting the communication graph. 


\begin{Problem}\label{prob: main problem}
    Given a connected graph $\mathcal G(t)$ and a monotonically decreasing total cost budget function $C:\mathbb R\to \mathbb R$, find a set of edge to be deleted  $\mathcal E_d\subset \mathcal E(t)$ and a set of edges to be added $\mathcal E_f\cap \mathcal E(t)=\emptyset$ to the base graph so that for the new graph $\mathcal G'(t) = (\mathcal V, \mathcal E'(t)) =  (\mathcal V, (\mathcal E(t)\setminus \mathcal E_d)\cup\mathcal E_f)$, the following conditions are satisfied for all $t\geq 0$:
    \begin{enumerate}
        \item The total cost of the connectivity maintenance remains within the threshold, $\sum_{(i, j)\in \mathcal E'(t)}c_{ij}(t)\leq C(t)$;
        \item The diameter of the graph remains below a given threshold, i.e., $D(\mathcal G'(t))\leq \tau_D$ for all $t\geq 0$.
    \end{enumerate}
\end{Problem}


\section{Proposed method}\label{sec: proposed method}
\subsection{Communication protocol}
In this work, we assume a \textit{broadcasting}-based communication model, where each robot $i$ broadcasts information at time $t$ that can be received by any neighboring robot $j\in \mathcal N_i(t)$ within a distance $R$ of the broadcasting robot. 
Given a communication medium, such as radio (for ground or aerial applications) or sonar (for underwater applications), the speed $v$ of the medium dictates the time as the total time taken to transfer a message of \textit{temporal length} $\delta T_l$ to a distance $d$ is 
    $\delta T = \frac{d}{v} + \delta T_l$.
Note that the upper-bound on this time delay is given by $\delta T \leq \frac{R}{v} + \delta T_M$, where $\delta T_M>0$ denotes the maximum allowed temporal length of the communication messages. In this work, we assume all transmitted messages to be of the same temporal length $\delta T_M>0$ for the sake of the simplicity of the exposition. 
Each communication packet originating from node $l\in \mathcal V$ contains a message consisting of $M^l = \left\{p_i^l, t_i^l\right\}_{i=1}^N$, where $p_i^l\in \mathbb R^3$ denotes the location of the node $i\in \mathcal V$ at time $t_i^l$, where $t_i^l\leq t$ is the timestamp of the latest information available to node $l$ at the current time $t$ about node $i$. For larger networks, the message can be truncated to the $ k$-nearest neighbors to satisfy bandwidth requirements. Since each node $k\in \mathcal V$ broadcasts such a message $M^k$, the information for each node $i\neq l$ is determined by the latest time of receipt of the information about node $i$ at node $l$:
\begin{align*}
    t_i^l & = \max\{t^k_i, \; k\in \mathcal N_l(t)\}, \\
    p_i^l & = \{p^k_i\; |\; \{p^k_i, t^k_i\}_{k\in \mathcal N_l(t)}, \;  t^k_i = t_i^l\}. 
\end{align*}
In plain words, since each node $l$ receives the location $p^k_i$ of the other nodes $i$ as known to its neighbor nodes $k\in \mathcal N_l(t)$, it keeps the latest available information through $t_i^l$ to maintain the most up-to-date positional information about the MRS. The communication delay leads to a mismatch in the actual location $p_i(t)$ of a robot $i\in \mathcal V$ at a time $t$ and its location $p_i^l$ known to another robot $l\in \mathcal V$ in the MRS. In the next section, we discuss a method of estimating the region of confidence for the estimated locations.   

\subsection{Position estimation}
The nodes represent the dynamical robots whose motion is governed by \eqref{eq: sys dyn}. In a cooperative setting, it is fair to assume that the MRS agents know the base control policy $\pi_i$ for each of the robots $i\in \mathcal V$. Hence, the main source of error in the estimated location $p_i^l$ from the actual position $p_i$ of robot $i\in \mathcal V$ by any other robot $l\in \mathcal V$ is the disturbance $d$ in the dynamics \eqref{eq: sys closed loop}. In order to get a practically useful and less conservative bound on this error, we assume that the control policy $\pi_i$ renders a predefined reference trajectory $x_{i, r}(\cdot)$ exponentially stable for the closed-loop system \eqref{eq: sys closed loop} in the absence of the disturbance $d$ for each robot $i\in \mathcal V$. 
\begin{Assumption}\label{assum: stable policy}
    For each robot $i\in \mathcal V$, given a reference trajectory $x_{i, r}:\mathbb R\to \mathbb R^n$ satisfying $\dot x_{i, r} = F(x_{i,r})$, the control policy $\pi_i$ is such that the closed-loop solution $\phi_i(\cdot) = \phi_i(\cdot,x_i(0); \pi_i, 0) $ with $d \equiv 0$ satisfies:
    \begin{align}
        \big(\phi_i(t)-x_{i, r}(t)\big)^T& \big(F(\phi_i(t))-F(x_{i,r}(t)\big)& \nonumber \\
        \leq & -\lambda \|\phi_i(t)-x_{i, r}(t)\|^2, 
    \end{align}
    for all $t\geq 0$ where $\lambda>\frac{1}{2}$. 
\end{Assumption}

\begin{Remark}
While we acknowledge that the construction of such a reference trajectory or the corresponding policy $\pi_i$ is not straightforward, the focus of this work is not on the low-level control design. We refer the interested readers to \cite{zhang2024gcbf+} for a learning-based approach on distributed control design with state constraints, and to \cite{garg2023learning} for an overview of various methods for control design for MRS.     
\end{Remark}
We can state the following result under mild assumptions on the boundedness of the disturbance $d$ in \eqref{eq: sys closed loop}. For the sake of brevity, let us denote the \textit{nominal} solution, i.e., without disturbance $d$, of \eqref{eq: sys closed loop} $\phi_i(\cdot, x_i(0); \pi_i, 0)$ as $\phi_{i, 0}(\cdot)$ and the \textit{perturbed} solution, i.e., with disturbance $d$, $\phi(\cdot, x_i(0); \pi_i, d)$ as $\phi_{i, d}(\cdot)$. 

\begin{Theorem}\label{thm: error bound}
    Assume that the disturbance $d$ in \eqref{eq: sys closed loop} is uniformly bounded as $\|d(t, x)\|\leq d_M$ for all $t\geq 0$, $x\in \mathbb R^n$, for some $d_M\geq 0$. Let $e_d(t) \coloneq  \|\phi_{i, 0}(t) - \phi_{i, d}(t)\|$. If $\phi_{i, 0}(0) = \phi_{i, d}(0) = x_{i, r}(0)$, then, under Assumption \ref{assum: stable policy}, it holds that
        $e_d(t)\leq \frac{\sqrt{2}}{2\lambda-1}d_M^2(1-e^{-(\lambda-\frac{1}{2})t})$ for all $t\geq 0$.
\end{Theorem}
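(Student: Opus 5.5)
The plan is a Lyapunov argument on the error between the nominal and perturbed solutions, followed by the comparison lemma. The last step, converting the bound into the exact form stated in Theorem~\ref{thm: error bound}, does not go through in general; I say so there rather than hide it. Set $e(t)=\phi_{i,d}(t)-\phi_{i,0}(t)$, so $e_d=\|e\|$ and $e(0)=0$. Subtracting the two instances of \eqref{eq: sys closed loop} gives
\begin{align*}
\dot e = F(\phi_{i,d})-F(\phi_{i,0}) + d(t,\phi_{i,d}).
\end{align*}

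The first step is to obtain a contraction estimate between $\phi_{i,d}$ and $\phi_{i,0}$. Assumption~\ref{assum: stable policy} is stated only for the pair $(\phi_i,x_{i,r})$. Since $\phi_{i,0}(0)=x_{i,r}(0)$ and both solve $\dot x=F(x)$, uniqueness gives $\phi_{i,0}\equiv x_{i,r}$. The assumption then has to be applied with $\phi_{i,d}(t)$ in place of $\phi_i(t)$, i.e.\ read as the one-sided Lipschitz condition
\begin{align*}
(y-x_{i,r}(t))^T\big(F(y)-F(x_{i,r}(t))\big)\le-\lambda\|y-x_{i,r}(t)\|^2
\end{align*}
along the perturbed trajectory. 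Justifying this reading is the first delicate point. With $V=\tfrac12\|e\|^2$, this reading and Young's inequality $e^Td\le \tfrac12\|e\|^2+\tfrac12 d_M^2$ give
\begin{align*}
\dot V\le -(2\lambda-1)V+\tfrac12 d_M^2 .
\end{align*}
This is where $\lambda>\tfrac12$ enters.

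Next I apply the comparison lemma with $V(0)=0$:
\begin{align*}
V(t)\le \frac{d_M^2}{2(2\lambda-1)}\big(1-e^{-(2\lambda-1)t}\big),
\end{align*}
which is the quadratic-in-$d_M$ quantity that presumably produced the $d_M^2$ factor in the statement. Taking square roots gives
\begin{align*}
e_d(t)\le \frac{d_M}{\sqrt{2\lambda-1}}\sqrt{1-e^{-(2\lambda-1)t}} .
\end{align*}
The exponent $\lambda-\tfrac12$ in the statement would come from instead studying $W=\|e\|$ through $\dot W\le -(\lambda-\tfrac12)W+\tfrac{d_M^2}{2W}$, or from using $\sqrt{1-e^{-2s}}$ together with $1-e^{-s}$.

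The final step, matching the stated right-hand side $\frac{\sqrt2}{2\lambda-1}d_M^2(1-e^{-(\lambda-\frac12)t})$, is the main obstacle. I expect it to fail as stated, and I would check this first by a scaling test. Take the admissible example $F(x)=-\lambda x$, $x_{i,r}\equiv 0$, $d\equiv d_M u$ with $\|u\|=1$. Then
\begin{align*}
e_d(t)=\frac{d_M}{\lambda}\big(1-e^{-\lambda t}\big).
\end{align*}
This is linear in $d_M$, so it exceeds the claimed bound when $d_M$ is small, for instance whenever $d_M<\frac{2\lambda-1}{\sqrt2\,\lambda}$ at large $t$.

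My plan is therefore as follows. First, verify this counterexample. Second, present the derivation above as the correct statement. Third, identify the additional hypothesis under which the stated form follows from it, for example $d_M\ge$ a threshold of order $\sqrt{2\lambda-1}$ combined with $\sqrt{1-e^{-2s}}\le\sqrt2\,(1-e^{-s})^{1/2}$. Note that the stated bound as written cannot be established without such an extra restriction.
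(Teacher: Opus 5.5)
Your diagnosis is right, and your derivation is essentially the paper's own proof. The paper never actually reaches the displayed $d_M^2$ bound. It takes $V=\tfrac12\|\phi_{i,0}-x_{i,r}\|^2+\tfrac12\|\phi_{i,d}-x_{i,r}\|^2$ and applies Assumption~\ref{assum: stable policy} to both trajectories. It therefore silently adopts the same one-sided Lipschitz reading along $\phi_{i,d}$ that you flag as delicate. It then gets $\dot V\le-(\lambda-\tfrac12)V+\tfrac12 d_M^2$, and its last line is $e_d(t)\le\sqrt{2(1-e^{-(\lambda-\frac12)t})/(2\lambda-1)}\,d_M$. That is also the form used afterwards as $\epsilon_{\delta t}$. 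So the displayed statement is a misprint, and your linear example correctly refutes it.

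Because $\phi_{i,0}\equiv x_{i,r}$, the paper's $V$ equals your $\tfrac12\|e\|^2$. You do not need a separate uniqueness hypothesis for this: Gronwall applied to Assumption~\ref{assum: stable policy} itself forces $\|\phi_{i,0}-x_{i,r}\|\equiv 0$. This is also the only reason the paper's step $\|\phi_{i,0}-x_{i,r}\|+\|\phi_{i,d}-x_{i,r}\|\le\sqrt{2V}$ is valid. The paper's rate $\lambda-\tfrac12$ is just a weakening of your $2\lambda-1$. Your bound implies the paper's via $1-e^{-2s}\le 2(1-e^{-s})$ with $s=(\lambda-\tfrac12)t$.

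The step that would not work is your proposed rescue. No lower bound on $d_M$ lets you deduce the displayed inequality from a square-root bound. As $t\to0^+$ your bound is of order $d_M\sqrt{t}$, while the displayed right-hand side is of order $d_M^2 t/\sqrt2$. So the implication fails on an initial interval, roughly $t<2/d_M^2$, whatever $d_M$ is.

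The clean way to settle when the displayed form holds is to skip Young's inequality. The bound $\tfrac{d}{dt}\tfrac12\|e\|^2\le-\lambda\|e\|^2+d_M\|e\|$ gives $\tfrac{d}{dt}\|e\|\le-\lambda\|e\|+d_M$. At $e=0$, use an upper Dini derivative or work with $\sqrt{\|e\|^2+\varepsilon^2}$ and let $\varepsilon\to0$. This yields $e_d(t)\le\frac{d_M}{\lambda}(1-e^{-\lambda t})$ for any $\lambda>0$, which your example attains with equality.

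Since $\frac{1-e^{-at}}{a}=\int_0^t e^{-a\sigma}\,d\sigma$ decreases in $a$, this is at most $\frac{2d_M}{2\lambda-1}(1-e^{-(\lambda-\frac12)t})$. For $d_M>0$, that lies below the displayed bound exactly when $d_M\ge\sqrt2$. Conversely, compare slopes at $t=0$ in your example: $d_M$ versus $d_M^2/\sqrt2$. This shows the displayed bound fails for every $0<d_M<\sqrt2$, a wider range than your large-$t$ threshold $\frac{2\lambda-1}{\sqrt2\,\lambda}$. So the correct extra hypothesis is $d_M\ge\sqrt2$, independent of $\lambda$, not a threshold of order $\sqrt{2\lambda-1}$.
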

\begin{proof}
    
    Consider a function $V(t) = \frac{1}{2} \|\phi_{i, 0}(t)-x_{i, r}(t)\|^2+\frac{1}{2} \|\phi_{i, d}(t)-x_{i, r}(t)\|^2$. It is easy to  Its time derivative along the solutions of \eqref{eq: sys closed loop} reads
    \begin{align*}
        \dot V(t) = & \big(\phi_{i, 0}(t)-x_{i, r}(t)\big)^T\big(F(\phi_{i, 0}(t)-F(x_{i, r}(t)\big)\\
        & + \big(\phi_{i, d}(t)-x_{i, r}(t)\big)^T\big(F(\phi_{i, d}(t)-F(x_{i, r}(t)\big)\\
        & + \big(\phi_{i, d}(t)-x_{i, r}(t)\big)^Td(t, \phi_{i,d}). 
    \end{align*}
    Under Assumption \ref{assum: stable policy}, it follows that
    \begin{align*}
        \dot V(t)\leq & -\lambda \|\phi_{i, 0}(t)-x_{i, r}(t)\|^2-\lambda \|\phi_{i, d}(t)-x_{i, r}(t)\|^2\\
        & + \|\phi_{i, d}(t)-x_{i, r}(t)\|\|d(t, \phi_{i,d}\|.
    \end{align*}
    Using Cauchy-Schwarz inequality, we obtain:
    \begin{align*}
      \dot V(t) & \leq -(\lambda-\frac{1}{2})V(t) + \frac{1}{2}\|d(t, \phi_{i, d})\|^2 \\
      & \leq -(\lambda-\frac{1}{2})V(t) + \frac{1}{2}d_M^2,
    \end{align*}
    where the last inequality follows from the uniform bound on the disturbance $d$. Using the fact that $V(0) = 0$ for the given initial conditions, from Comparison Lemma \cite[Lemma 3.4]{khalil2002nonlinear}, it follows that:
    \begin{align*}
        V(t)\leq \frac{1}{2\lambda-1}d_M^2(1-e^{-(\lambda-\frac{1}{2})t}) \quad \forall t\geq 0.
    \end{align*}
    Using the triangle inequality, we have that:
    \begin{align*}
        e_d(t) & = \|\phi_i(t, x_i(0); \pi_i, 0)-\phi_i(t, x_i(0); \pi_i, d)\|\\
        & \leq \|\phi_{i, 0}(t)-x_{i, r}(t)\|+\|\phi_{i, d}(t)-x_{i, r}(t)\|\\
        & \leq \sqrt{2 V(t)} \leq \sqrt{\frac{2(1-e^{-(\lambda-\frac{1}{2})t})}{2\lambda-1}}d_M,
    \end{align*}
    which completes the proof. 
\end{proof}
Under the assumption of a cooperative setting, the \textit{nominal} trajectory of a robot $i\in \mathcal V$ can be computed by any other robot $l\in \mathcal V$ for $t\geq t_i$ based on the information of $x_i(t_i)$. Based on Theorem \ref{thm: error bound}, the error in the position estimate $e_i^l(t) = \|p_i^l-p_i(t)\|$ an be bounded as
\begin{align*}
    e_i^l(t) \leq \sqrt{\frac{2(1-e^{-(\lambda-\frac{1}{2})(t-t_i^l)})}{2\lambda-1}}d_M.
\end{align*}
As a result, the position estimate error is smallest when the robots use the latest available information, corresponding to the maximum timestamp $t_i^l$. Define the bound in the right-hand side of the above inequality for a given time difference $\delta t\geq 0$ as:
\begin{align}
    \epsilon_{\delta t} \coloneqq \sqrt{\frac{2(1-e^{-(\lambda-\frac{1}{2})\delta t})}{2\lambda-1}}d_M
\end{align}
Now, consider a node $i\in \mathcal V$ located at $p_i(t)$ at time $t\geq 0$ with a set of neighbors $\mathcal N_i(t)$. For each $l\in \mathcal N_i(t)$, the node $i$ receives the information $\{p_l, t_l\}_{l\in \mathcal N_i}$ as the position $p_l$ transmitted by the neighbor at time $t_l$. Based on time delay $\delta t_l = t-t_l$, the location $p_l(t)$ of the node $l$ at the time $t$ when the node $l$ receives the message must lie within the ball $\mathbb B_r(p_i(t))$ where $r = v~\delta t_l$. Furthermore, based on the error bound computed above, we know that $p_l(t)\in \mathbb B_{\rho_m}(p_l(t_l))$ where $\rho_m = \epsilon_{\delta t_l}$. Hence, it holds that
\begin{align}\label{eq: pos est 1 hop}
    p_l^i(t)\in P_l^i(t) \coloneqq \mathbb B_{v(\delta t_l)}(p_i(t)) \cap \mathbb B_{\epsilon_{\delta t_l}}(p_l(t_l)).
\end{align}
The above estimate is valid for all the one-hop neighbors $l\in \mathcal N_i(t)$ for any node $i\in \mathcal V$. For multi-hop neighbors with shortest path-length at least 2 along the labeled path $i = 1, 2, \cdots, m = l$, $m\geq 3$, the estimated position of a node $l\in \mathcal V$ by any other node $i\in \mathcal V$ can be computed recursively as:
\begin{align}\label{eq: pos est multi hop}
    p_{m}^{1}(t) \in P_{m}^{1}(t) \coloneqq & \left(\bigcup_{p\in P_{m-1}^1(t^1_{m-1})}\mathbb B_{v \delta t_{m-1}}(p)\right) \nonumber \\
    & \quad \bigcap \mathbb B_{\epsilon_{\delta t_m}}(p_m(t^1_m)),
\end{align}
with $P_{2}^{1}(t_2^1)$ given in \eqref{eq: pos est 1 hop}, where $\delta t_i$ is the communication delay between the nodes $i+1$ and $i$, $i\leq m-1$. 
The main logic of the position estimation in \eqref{eq: pos est multi hop} is that the node $n_{i+1}$ must be within a distance of the node $n_i$ dictated by the communication delay $\delta t_i$. This method helps \textit{shrink} the estimation error from Theorem \ref{thm: error bound}, as illustrated in Figure \ref{fig: pos est error}.

\begin{figure}[t!]
    \centering
    \begin{subfigure}[b]{0.5\linewidth}
        \centering
        \includegraphics[height=1.2in]{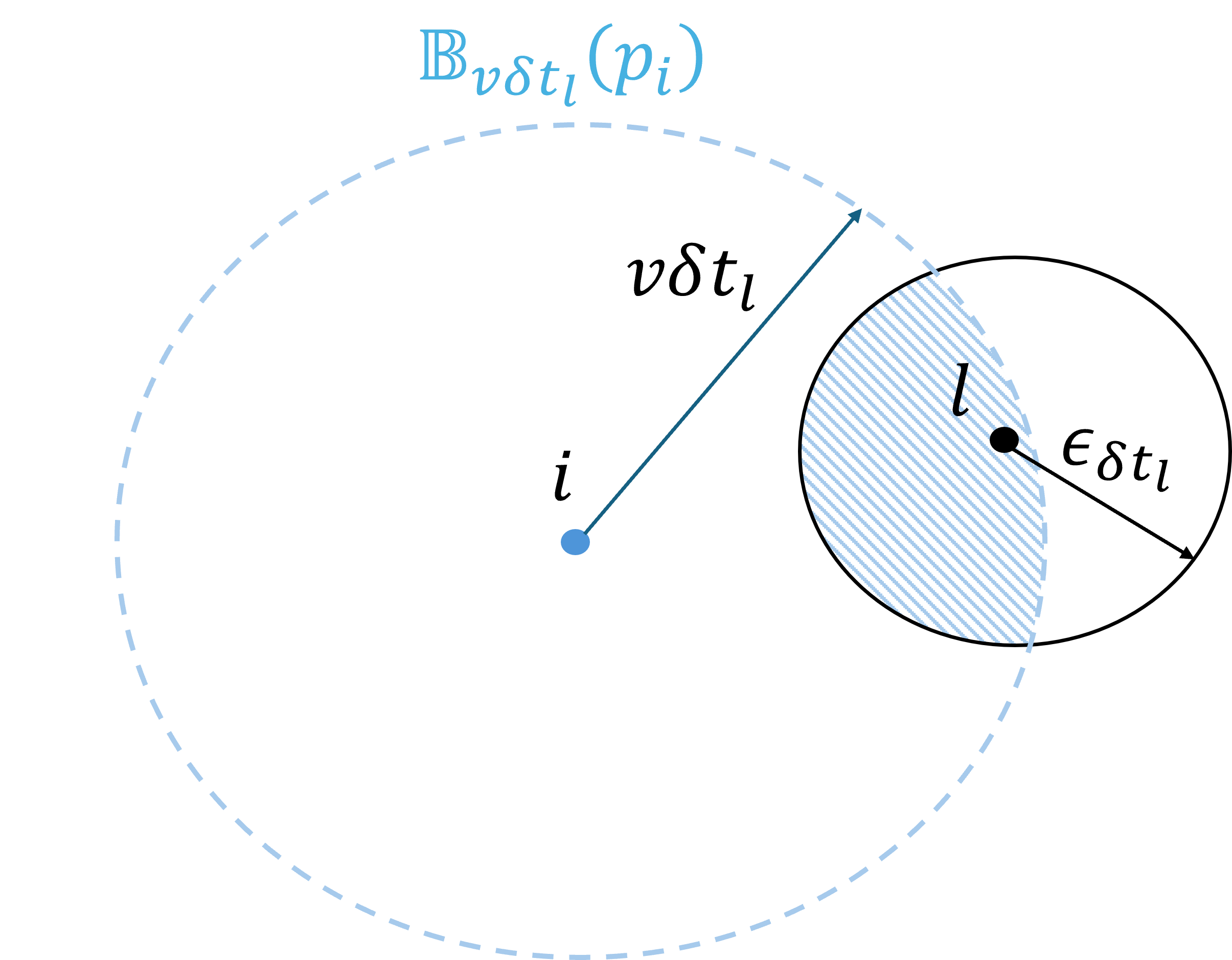}
        \label{fig: pos est error 1}
    \end{subfigure}%
    ~ 
    \begin{subfigure}[b]{0.5\linewidth}
        \centering
        \includegraphics[height=1.2in]{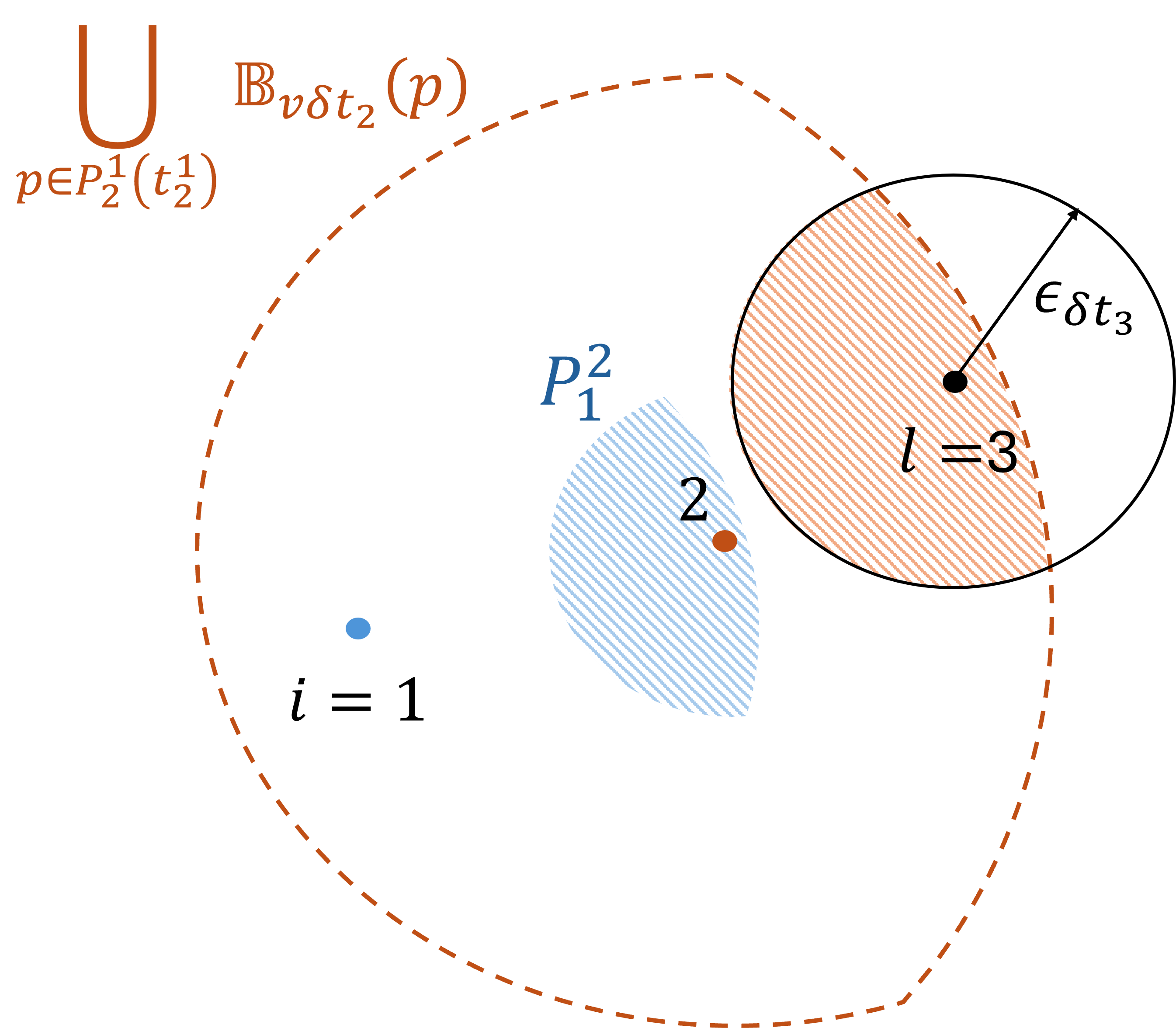}
        \label{fig: pos est error multi}
    \end{subfigure}
    \caption{Position estimate, denoted as the shaded region, for 1-hop and 2-hop neighbors in the presence of time delays.}
    \label{fig: pos est error}
\end{figure}

Recall that all this information is collected at the central node $v_c$ for the sake of making a decision on edge deletion. Hence, we can assume that if $(i, j)\in \mathcal E(0)$ and has not been permitted to be removed by time $t$, it holds that $(i, j)\in \mathcal E(t)$. Hence, denoting $l = v_c$, if $(i, j)\in \mathcal E(t)$, then it holds that 
    $\|p_i^l(t)-p_j^l(t)\|\leq R$.
Hence, such a constraint can be used to shrink the estimated locations of all the nodes in the graph through their connectivity information. 

\subsection{Cost estimation}
As noted in Section \ref{sec: Problem}, the decision on whether an edge should be deleted is based on the cost of maintaining the edge, which in turn is a function of the physical distance of its nodes; see \eqref{eq: cost distance}. Let $l = v_c$ be the central node, as defined in Section \ref{sec: Preliminaries} and $p_i^l(t)$ and $p_j^l(t)$ be the estimated locations of the nodes $i, j$ by $l$ at time $t$, with $(i, j)\in \mathcal E(t)$. Based on the discussion in the previous section, the estimated distance $d_{ij}^l(t)$ between the nodes $i, j$ be the node $l$ is given by:
\begin{align}\label{eq: dist estimate ij}
    d_{ij}^l(t) \in D_{ij}^l(t) \coloneqq & \big\{d\; | \; d= \|p_1-p_2\|, \nonumber \\
    & ~ p_1\in P_i^l(t), p_2\in P_j^l(t)\big\}. 
\end{align}
Based on this, given a threshold $\alpha \in (0, 1)$, we can define the risk-score based on the edge-length between the nodes $(i, j)\in \mathcal E(t)$ at a time $t$ (according to the information available at the central node $l$) is greater than $\alpha R$ as:
\begin{align}\label{eq: prob edge delete}
    P_{ij}^{>}(\alpha) \coloneqq \frac{|\{d\in D_{ij}^l(t)\; |\; R\geq d> \alpha R\}|}{|\{d\in D_{ij}^l(t)\; |\; d\leq R\}|}. 
\end{align}
It is easy to see that $ P_{ij}^{>}$ is a decreasing function of the parameter $\alpha$, i.e., $P_{ij}^{>}(\alpha_1)\leq P_{ij}^{>}(\alpha_2)$ for $\alpha_1\geq \alpha_2$, with $P_{ij}^{>}(0) = 1$ and $P_{ij}^{>}(1) = 0$. Since the cost function in \eqref{eq: cost distance} takes a non-zero value only when $p_{ij}(t)\geq \rho_m R$ where $\rho_m\in (0, 1)$, the estimate of a distance smaller than $\rho_m R$ has a zero estimated cost. As a result, the central node can calculate the estimated upper-bound on the cost for maintaining an edge $(i, j)\in \mathcal E(t)$ at time $t$:
\begin{align}\label{eq: max cost est}
    c_{ij}^l(\rho_m,t) = c_{max} R(1-\rho_m) P_{ij}^{>}(\rho_m),
\end{align}
for a given $\rho_m\in (0, 1)$ and $c_{max}>0$ in the cost-function \eqref{eq: cost distance}.

\subsection{Decision-making}
Based on the estimated cost in \eqref{eq: max cost est}, the central node $l= v_c$ needs to make a decision on the edge $(i, j)\in \mathcal E(t)$ to be deleted as well as a set of new edges $\mathcal E_f$ so as to minimize the total cost of maintenance of the graph $\mathcal G(t)$ at a time $t$ and keeping the diameter of the new graph within the required bounds. The new edges can be proposed for nodes that are \textit{close enough} to each other as estimated by the central node $l$. 
The cost-parameter $\rho_m$ in \eqref{eq: cost distance} dictates whether an edge constitutes a cost, and hence, we define the confidence-score of a new edge formation between nodes $i, j$ such that $(i, j)\notin \mathcal E(t)$ at time $t$:
\begin{align}\label{eq: prob new edge}
   P_{ij}^{<}[\rho_M] \coloneqq \frac{|\{d\in D_{ij}^l(t)\; |\; d< \rho_m R\}|}{| D_{ij}^l(t)|}.
\end{align}

\begin{algorithm}
\caption{Hybrid topology control 
}
\label{algo: main algo}
\KwIn{$\mathcal G$, $\overline{\mathcal G}$, central node $v$, position estimate regions $P_i^v$, thresholds 
$p, \tau_D, \bar c, \delta, C,  \Delta$}
\KwOut{New graph $\mathcal G'$, new central node $v'$}


\BlankLine
\Comment{ Part A: Find high-cost edges to be dropped}
${\mathcal E}_d = \emptyset$\;

\For{$i\in \mathcal V, j\neq i$}{
        Compute cost estimate $c_{ij}^v$\;
        \Comment{Check cost and connectivity constraints}
        \If{$c_{ij}^v\geq \bar c$ \textnormal{and} $\lambda_2(L(\mathcal E\setminus (i, j)))>0$}{
            $e_d = (i, j)$\;
            $\hat {\mathcal E}' = (\mathcal E\setminus ({\mathcal E}_d\cup e_d))$\;
    $\hat{\mathcal G} = (\mathcal V, \hat{\mathcal E}')$\;
    \Comment{Check total cost of new graph}
    $c_{new} = \sum\limits_{(i, j)\in \hat{\mathcal E}'(t)}c_{max} R(1-\rho_m) P_{ij}^{>}(\rho_m)$\;
    \Comment{Check diameter of new graph}
    $\hat D = D(\hat{\mathcal G})$\;
    \If{$c_{new}\leq C-\delta$ \textnormal{and} $\hat D\leq \tau_D$}{
    $\mathcal E_d = \mathcal E_d\cup e_d$\;
        }
    }
}

$\mathcal E' = \mathcal E\setminus\mathcal E_d$\;

\BlankLine
\Comment{ Part B: Find useful edges to be added }
${\mathcal E}_f = \emptyset$\;
\For{$(i, j)\notin \mathcal E$}{
\Comment{Check closeness of the nodes}
\If{$P_{ij}^{<}[\rho_M] \geq 1-p$}{


${\mathcal E}_f = {\mathcal E}_f\cup (i, j)$
}
}

\Comment{Central node waits for confirmation about new edge formation}

$\hat{\mathcal E}\subset \mathcal E_f$: Edges that sent confirmation on addition\;

$\mathcal E' = (\mathcal E\setminus\mathcal E_d)\cup \hat{\mathcal E}_f$\;
$\mathcal G' = (\mathcal V, \mathcal E')$\;

\BlankLine
\Comment{ Part C: Re-elect central node }

$v' = \argmin\limits_{v\in \mathcal V}\max\limits_{j\in \mathcal V}d_{\mathcal G'}(v, j)$

\BlankLine

Return: $\mathcal G', v'$

\end{algorithm}

\subsection{Decision implementation}
The decision on edge deletion, as well as new edges to be added, is broadcast by the central node $l$. A drop or add order sent to nodes $i$, $j$ reaches the respective nodes after
$\delta t_i$ and $\delta t_j$ seconds. The nodes involved in the edge deletion can simply disconnect. However, the nodes involved in new edge formation over the base graph must send a confirmation to the central node, which takes an additional communication delay. As a result, the central node updates $\mathcal G\to \mathcal G'$ only after receiving the confirmation of new edge formation, communication delay for which is dictated by $2~\text{rad}(\mathcal G)$ in the worst case. As a result, the central node cannot be sure whether the proposed new edges will definitely form when making a decision on edge deletion.
Hence, a simultaneous edge deletion and addition cannot be performed to create the new base graph. As a workaround, we propose a sequential mechanism where the central node makes the decision on edge deletion based on the risk-score \eqref{eq: prob edge delete} and diameter constraint,
followed by a decision on new edge formation based on confidence-score \eqref{eq: prob new edge},
sequentially. The intermediate graph before the execution of the central node's decision is denoted as $\mathcal G_r = (V, \mathcal E_r)$. Once the decision reaches the concerned nodes and is executed successfully, the graph known to the central node $\mathcal G'$ and the actual graph $\mathcal G_r$ become the same (see Section \ref{sec: sim} for more details). The overall algorithm is described in Algorithm \ref{algo: main algo}.

\subsection{New central node selection}
Upon deletion of the edges $\mathcal E_d$ and addition of edges $\mathcal E_f$, it is possible that another node $v_c'\neq v_c$ becomes the central node for the new graph $\mathcal G'(t)$. Based on the adjacency information $\mathcal E'(t)$ of the new graph $\mathcal G'(t)$, the \textit{current} central node $v_c$ computes the new central node through $v_c' = \argmin_{v\in \mathcal V}e_{\mathcal G'(t)}(v)$ and broadcasts this information along with its decision $(\mathcal E_d, \mathcal E_f)$. Computation of the new central node requires computing shortest paths between each pair of nodes in the new graph $\mathcal G'$. A fundamental property of shortest paths under edge deletion is that distances
can only increase, and only for vertex pairs for which every shortest path in $\mathcal G$ used the deleted edge. This observation is standard in the literature on dynamic shortest-path algorithms
\cite{frigioni2000dynamic,demetrescu2006dynamic}. Consequently, for any vertex pair $(u,v)$ such that there exists at least one
shortest $u$--$v$ path in $\mathcal G$ avoiding $e$, we have $d_{G'}(u,v) = d_G(u,v)$. Using precomputed shortest-path counts $\sigma_{uv}$, one can test in constant time whether all shortest paths between $u$ and $v$ traverse a given edge $e$ via standard path-decomposition arguments \cite{brandes2001betweenness}. Thus, only vertices $u$ for which at least one farthest vertex is affected by the removal of $e$ require further processing. For each such vertex $u$, the updated eccentricity $e_{\mathcal G'}(u)$ can be computed by recomputing single-source shortest paths from $u$ in $\mathcal G'$, while all other eccentricities remain unchanged. As a result, the new ordered set $\mathcal V_e(G')$ follows directly from the updated eccentricities. This approach is an exact specialization of known decremental all-pairs shortest path (APSP) and eccentricity-maintenance techniques
\cite{rodittzwick2012dynamic,italiano2012dynamic}. The worst-case running time for such updates remains quadratic, which is unavoidable due to known lower bounds for the dynamic diameter and eccentricity problems \cite{roditywilliams2013hardness}. However, in practice, the number of affected vertices is typically small, making incremental recomputation substantially faster than recomputing all-pairs shortest paths from scratch.



\section{Simulation results}\label{sec: sim}
We validate the performance of the proposed hybrid topology control framework (termed Method A in the results) through a series of numerical simulations. We consider graphs with 20 nodes, a communication delay of $0.5$s for one-hop communication, and a diameter bound of $\tau_D = 8$. The central node makes decisions every $40$ steps. We consider the following methods as baselines for comparisons:
\begin{itemize}
    \item \textbf{Method B}: An MST-based centralized framework \cite{kershenbaum1972computing} without any communication delays or diameter bound. This is an idealized baseline to compute the minimum possible accumulated cost for edge maintenance.
    \item \textbf{Method C}: An MST-based centralized framework \cite{kershenbaum1972computing} with no time delays but with the same diameter bound of $\tau_D = 8$. 
    \item \textbf{Method D}: A distributed approach where we use the proposed decision-making mechanism but with a fixed central code. 
\end{itemize}
Here, we use $\mathcal E_s(t) = \{(i, j)\in \mathcal E(t)\; |\;  c_{ij}>0\}$ as the set of ``stressed" edges whose cost is non-zero. The cumulative cost is defined as $C = \sum_t\sum_{(i, j)\in \mathcal E(t))} c_{ij}(t)$. Figure \ref{fig: comparison all combined} plots the performance of all the methods for a randomly chosen initial configuration. The left figures in each row plot the graph at the final time instant, highlighting the central node $l$ with a star, the positional uncertainty regions $P_i^l$ for each node $i\neq l$ with dotted circles, and the graph edges. The right figure plots the number of edges at each time instant in $E$ as well as the number of stressed edges in $E_s$, with a red-dotted line denoting the desired upper-bound on the number of stressed edges.  Noting that the \textit{real} graph, with edges denoted as $E_r$, after a decision has been made by the central node, might be different from the graph known to the central node, we plot the number of edges in $E_r$ also in Figure \ref{fig: comparison all combined}. The two centralized approaches lead the graph to form a linear minimum spanning tree (MST) in the first decision, primarily because there are no diameter constraints. However, the linear graph achieved by Method B might not be desirable in many applications. The centralized approach with time delay (i.e., Method C) also leads to an MST, but because of the enforced diameter constraint, the resulting MST is not linear and satisfies the maximum diameter requirement. Finally, the distributed method with a fixed central node (Method D), which, in some sense, is a pseudo-distributed method since it still has a single point of failure, exhibits behavior similar to that of the proposed approach.

\begin{figure}[h]
    \centering
    \includegraphics[width=0.95\linewidth]{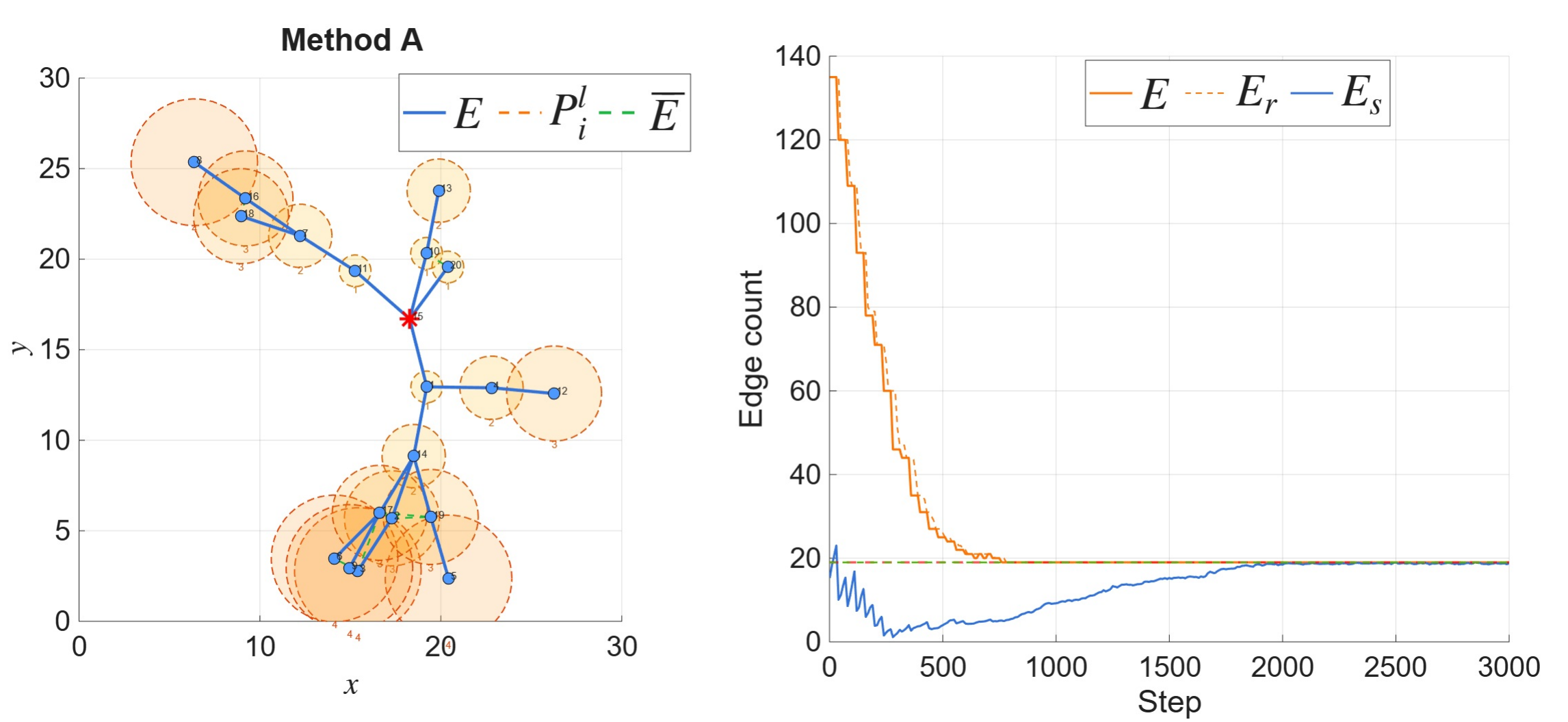}
    \includegraphics[width=0.95\linewidth]{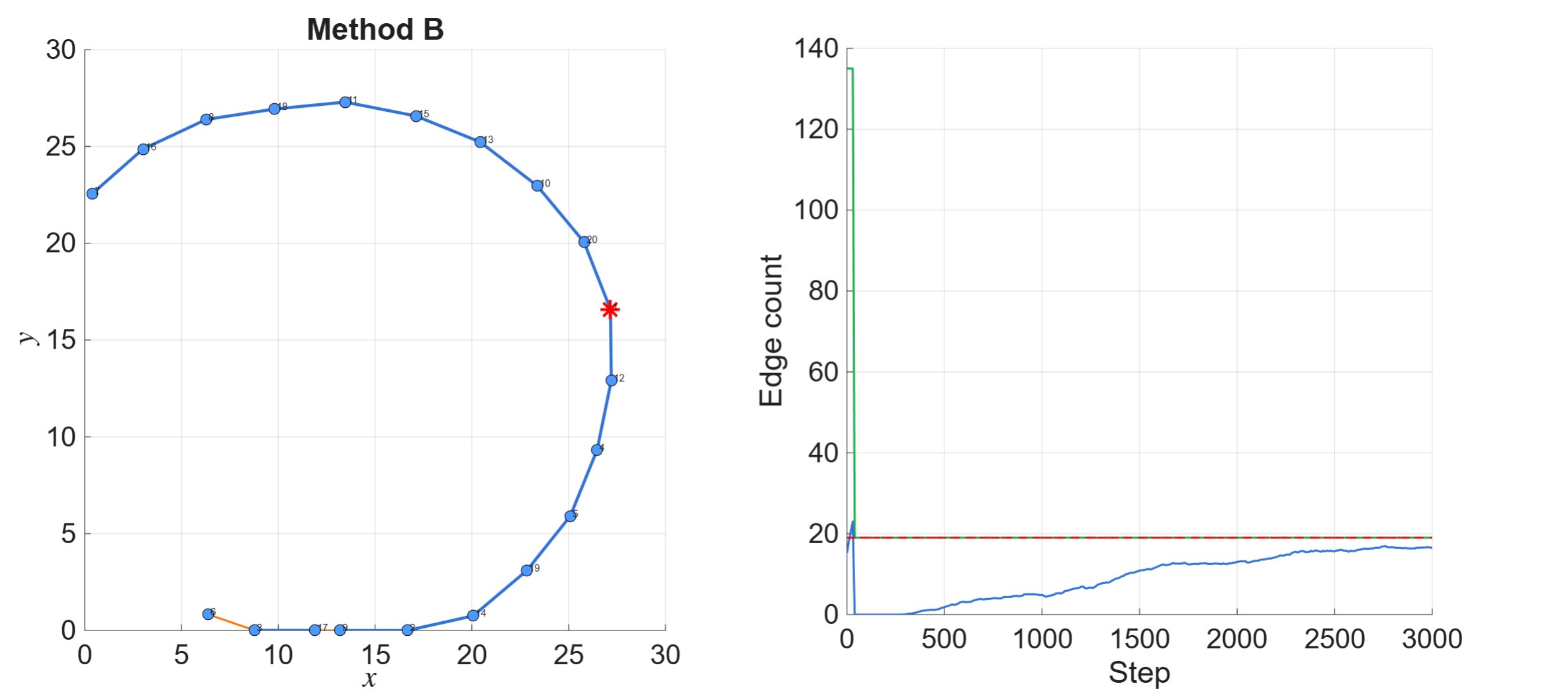}
    \includegraphics[width=0.95\linewidth]{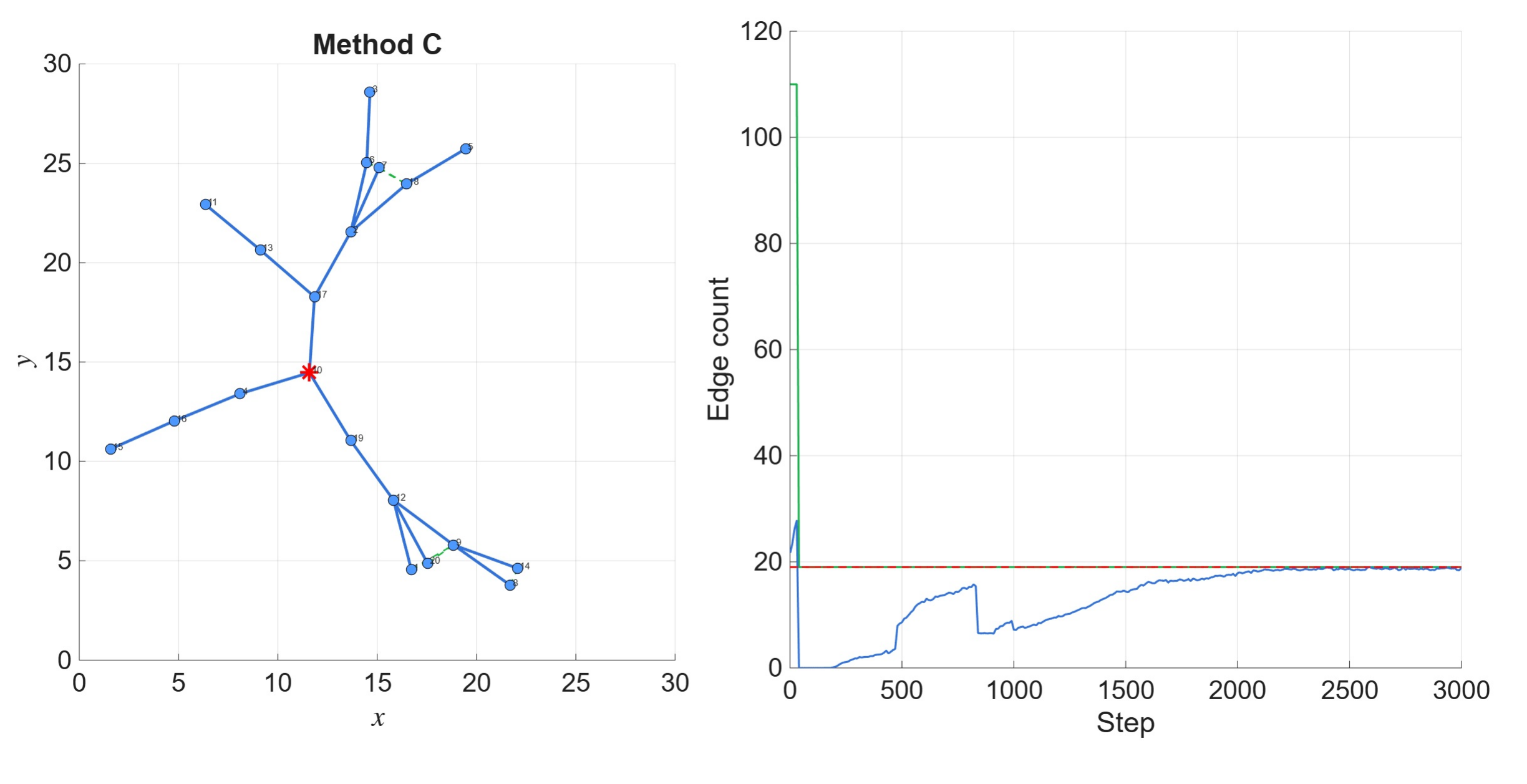}
    \includegraphics[width=0.95\linewidth]{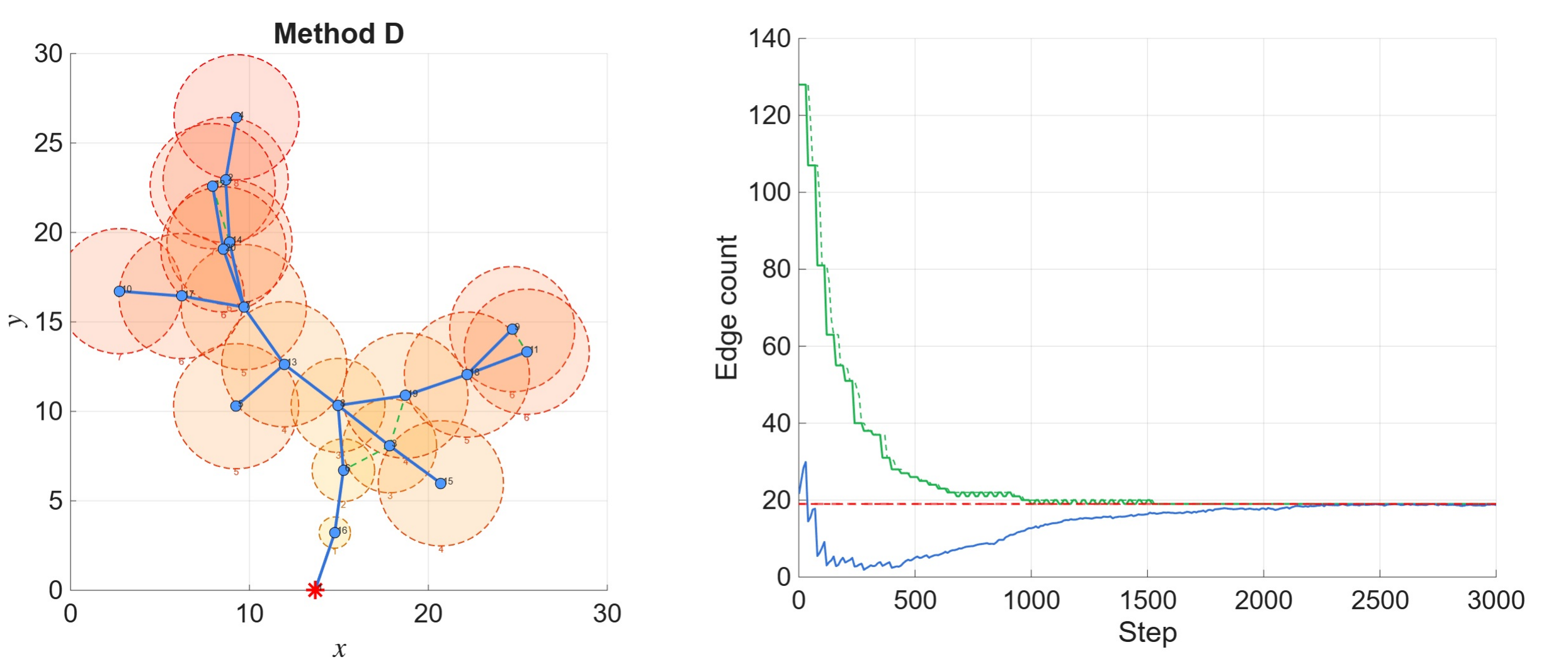}
    \caption{Simulation results for \textbf{A: proposed method}, \textbf{B: centralized method with no delays and no diameter limit}, \textbf{C: centralized method with no diameter limits}, and \textbf{D: distributed method with fixed central node} with 20 nodes. In each row, the left figure illustrates the final graph after 3000 iterations, and the right figure plots the number of edges in the graph and the number of stressed edges at each time step.} 
    \label{fig: comparison all combined}
\end{figure}

Table \ref{tab: cost comparison} lists the cumulative costs and the wall-clock runtime averaged over 1000 runs of the considered methods. As expected, Method A provides a global minimum for the total cost and also has the minimum runtime. However, the proposed method outperforms the other approaches, demonstrating that it accounts for realistic communication delays and positional uncertainties while remaining on par with idealistic methods. Furthermore, when the MST-based method is subject to the diameter constraint, its runtime increases significantly, highlighting the efficiency of the proposed method.




\begin{table}[h]
\setlength{\tabcolsep}{10pt} 
\renewcommand{\arraystretch}{1.25} 
    \begin{center}
        
    \begin{tabular}{|c|c|c|c|}
        \hline
        \textbf{Method} & \textbf{Cost} & \textbf{Time} \\
        \hline \hline
        Method A & 2069.3 & 0.4s  \\
        \hline
        Method B & 1572.9 & 0.1s \\
        \hline
        Method C & 2105.5 &  1.4s\\
        \hline
        Method D & 2082.9 & 0.4s \\
        \hline
    \end{tabular}
    \caption{Cumulative costs and wall-clock time for the proposed and baseline methods averaged over 1000 runs for graphs with $|\mathcal V| = 20$ and $\tau_D = 8$. }
    \label{tab: cost comparison}
    \end{center}
\end{table}

Noting that Method B is computationally very efficient for the case of $|\mathcal V|=20$, we benchmark the proposed method against it as we vary the number of nodes for a fixed diameter constraint $\tau_D = 8$, see Table \ref{tab:runtime for different N}. We observe that the computation time increases almost linearly for the efficient MST-based algorithm, whereas that of the proposed method grows superlinearly. We do note that the proposed \textit{greedy} algorithm has not been optimized for performance, and for up to 50 nodes, the per-decision time of $\sim 0.04$s is acceptable. One of the reasons for the long computation time is that we use the Monte Carlo method to evaluate the risk-score \eqref{eq: prob edge delete} and the confidence-score \eqref{eq: prob new edge}. Future work involves improving the algorithm's computational performance and optimizing its implementation to minimize computational overhead.


\begin{table}[h]
\renewcommand{\arraystretch}{1.25}
    \centering
    \begin{tabular}{|c|c|c|c|c|c|}
        \hline
        \backslashbox{\text{Method}}{$|\mathcal V|$} & $20$ & $30$ & $40$ & $50$& $60$ \\
         \hline
         \hline 
         Method A & 0.36s & 0.78s &  1.81s & 4.02s & 8.21s \\
         \hline
         Method B & 0.06s & 0.09s &  0.14s & 0.21s &  0.32s \\
         \hline
    \end{tabular}
    \caption{Wall-clock time comparison for the proposed Method A and the baseline Method B over 100 runs with 100 decisions in each run for varying $|\mathcal V|$ and fixed diameter bound of $\tau_D = 8$.}
    \label{tab:runtime for different N}
\end{table}

Finally, we perform simulation experiments on a graph with a fixed size, i.e., $|\mathcal V| = 50$, and vary the required diameter bound $\tau_D$ to compare the performance, in terms of the cumulative cost and the runtime, of the proposed method and the centralized baseline Method C that accounts for the diameter bound, see Table \ref{tab:runtime for different D}. The cumulative cost decreases with increasing $\tau_D$, consistent with the intuition that a larger diameter allowance allows the central node to make better decisions, reducing the overall cost of edge maintenance. As the ratio $\frac{\tau_D}{|\mathcal V|}$ decreases, the cumulative cost increases for both the proposed method and the baseline Method C. However, the statistics show that the proposed method outperforms the baseline in each case.

\begin{table}[h]
\renewcommand{\arraystretch}{1.25}
\centering
\begin{tabular}{|l|cc|cc|cc|}
\hline
  & \multicolumn{2}{c|}{$\tau_D = 5$}                     & \multicolumn{2}{c|}{$\tau_D = 10$}                    & \multicolumn{2}{c|}{$\tau_D = 15$}                    \\ \hline
                         & \multicolumn{1}{c|}{Time} & Cost & \multicolumn{1}{c|}{Time} & Cost & \multicolumn{1}{c|}{Time} & Cost \\ \hline
{Method A} & \multicolumn{1}{c|}{  \textbf{5.2s } }          &       \textbf{5922.9}        & \multicolumn{1}{c|}{\textbf{4.2s}}              &          \textbf{4107.7}     & \multicolumn{1}{c|}{\textbf{3.9s}}              &       \textbf{3410.9}        \\ \hline
{Method C}        & \multicolumn{1}{c|}{32.8s}              &     6991.4       & \multicolumn{1}{c|}{57.6s}              &      4814.2        & \multicolumn{1}{c|}{33.9s}              &       {4314.1}       \\ \hline
\end{tabular}
\caption{Wall-clock time and cumulative cost comparison for the proposed Method A and the baseline Method C over 10 runs with 100 decisions in each run for varying diameter bound $\tau_D$ and fixed number for nodes $|\mathcal V| = 50$.}
    \label{tab:runtime for different D}
\end{table}

The runtime of the proposed method also decreases as the diameter bound $\tau_D$ increases, which makes sense as the number of possibilities of edge deletion increases with an increase in $\tau_D$, allowing the central node to make a decision faster. For the baseline Method C, the runtime does not follow a monotonic pattern as this method consists to two components: an MST algorithm and a diameter-constraining step that utilizes APSP. Its computational cost is dominated by the second component, which evaluates candidate edges on the current diameter path; each evaluation recomputes APSP distances. When the diameter bound is loose (e.g., $\tau_D = 15$), the initial MST typically satisfies the constraint, and the diameter-check step terminates quickly. When the bound is very strict (e.g., $\tau_D = 5$), the constraint is often infeasible with the available communication edges. As a result, the diameter-enforcing search stalls early, and the method falls back to a breadth-first search (BFS) tree construction. The runtime peak occurs at intermediate bounds (e.g., $\tau_D = 10$), where the constraint is tight but feasible, causing many candidate evaluations.

\section{Conclusions}\label{sec: conclusions}
We proposed a novel hybrid topology control framework for a group of mobile robots under the presence of unknown uncertainties and communication delays. The proposed approach is a hybrid of centralized and distributed frameworks, where the central node decides which edges to delete to reduce the cost of maintaining the communication network, and which new edges to add to keep the graph diameter within given limits, and the central node can change based on the graph topology. We presented an efficient mechanism for estimating the positions of nodes in a graph in the presence of bounded uncertainties and communication delays, using information collected from all nodes, and used it to estimate the cost of edge maintenance. Finally, we illustrated, through numerical case studies, that the proposed approach performs on par with idealized methods without making such strong, unrealistic assumptions. 

As stated in the previous section, future work will focus on optimizing the decision-making framework for computational efficiency. Future work also involves extending the framework to a truly distributed one, where each node can safely decide whether to drop or add edges based on time-delayed information. 

\bibliographystyle{IEEEtran}
\bibliography{myreferences}

\end{document}